\newtheorem{theorem}{Theorem}
\newtheorem{lemma}[theorem]{Lemma}
\newtheorem{conjecture}[theorem]{Conjecture}
\begin{document}
\newcommand{\Addresses}{{
\bigskip
\footnotesize

\medskip

Irina~\DJ ankovi\'c, \textsc{St John's College, Cambridge, CB2 1TP, UK.}\par\nopagebreak\textit{Email address:}
\texttt{ia389@cam.ac.uk}

\medskip

Maria-Romina~Ivan, \textsc{Department of Pure Mathematics and
Mathematical Statistics, Centre for Mathematical Sciences, Wilberforce
Road, Cambridge, CB3 0WB, UK.}\par\nopagebreak\textit{Email address:}
\texttt{mri25@dpmms.cam.ac.uk}

}}
\pagestyle{fancy}
\fancyhf{}
\fancyhead [LE, RO] {\thepage}
\fancyhead [CE] {IRINA \DJ ANKOVI\'C, MARIA-ROMINA IVAN}
\fancyhead [CO] {SATURATION FOR SMALL ANTICHAINS}
\renewcommand{\headrulewidth}{0pt}
\renewcommand{\l}{\rule{6em}{1pt}\ }
\title{\Large\textbf{SATURATION FOR SMALL ANTICHAINS}}
\author{IRINA \DJ ANKOVI\'C, MARIA-ROMINA IVAN}
\date{}
\maketitle
\begin{abstract} For a given positive integer $k$ we say that a family of subsets of $[n]$ is $k$-antichain saturated if it does not contain $k$ pairwise incomparable sets, but whenever we add to it a new set, we do find $k$ such sets. The size of the smallest such family is denoted by $\text{sat}^*(n, \mathcal A_{k})$. Ferrara, Kay, Kramer, Martin, Reiniger, Smith and Sullivan conjectured that $\text{sat}^*(n, \mathcal A_{k})=(k-1)n(1+o(1))$, and proved this for $k\leq4$. In this paper we prove this conjecture for $k=5$ and $k=6$. Moreover, we give the exact value for $\text{sat}^*(n, \mathcal A_5)$ and $\text{sat}^*(n, \mathcal A_6)$. We also give some open problems inspired by our analysis.
\end{abstract}
\section{Introduction}
\par For a positive integer $k$ we say that a family $\mathcal F$ of subsets of $[n]=\{1,\ldots,n\}$ is \textit{$k$-antichain saturated} if $\mathcal F$ does not contain $k$ pairwise incomparable sets, but for every set $X\notin\mathcal F$, the family $\mathcal F\cup\{X\}$ does contain $k$ incomparable sets. We denote by $\text{sat}^*(n,\mathcal A_{k})$ the size of the smallest $k$-antichain saturated family. Equivalently (by Dilworth's theorem), $\text{sat}^*(n,\mathcal A_k)$ is the size of the smallest family that is maximal subject to being the union of $k-1$ chains. \par To see an example of a $k$-saturated family, let us call a chain of subsets of $[n]$ \textit{full} if it has size $n+1$. Then it is easy to see that a collection of $k-1$ full chains that intersect only at $\emptyset$ and $[n]$ is a $k$-antichain saturated family. Thus for $n$ large enough we certainly have $\text{sat}^*(n, \mathcal A_{k})\leq (k-1)(n-1)+2$. 
\par Ferrara, Kay, Kramer, Martin, Reiniger, Smith and Sullivan \cite{Ferrara2017TheSN} improved this upper bound slightly, showing that for $n\geq k\geq4$, we have $\text{sat}^*(n, \mathcal A_{k})\leq (n-1)(k-1)-(\frac{1}{2}\log_2k+\frac{1}{2}\log_2\log_2k+c)$, for some absolute constant $c$. In the other direction, they also showed that $\text{sat}^*(n,\mathcal A_k)\geq 3n-1$ for $n\geq k\geq 4$. This immediately implies that for $n\geq4$ we have $\text{sat}^*(n,\mathcal A_4)=3n-1$. They also showed that $\text{sat}^*(n,\mathcal A_2)=n+1$ and $\text{sat}^*(n,\mathcal A_3)=2n$, and conjectured that $\text{sat}^*(n,\mathcal A_{k})=(k-1)n(1+o(1))$. Here $o(1)$ denotes a function that tends to 0 as $n$ tends to infinity for each fixed $k$, in other words we are thinking of $k$ as fixed and $n$ growing. Later on, Martin, Smith and Walker \cite{martin2019improved} improved the lower bound by showing that for $k\geq4$ and $n$ large enough $\text{sat}^*(n,\mathcal A_{k})\geq (1-\frac{1}{\log_2(k-1)})\frac{(k-1)n}{\log_2(k-1)}$.\par We mention that this problem is part of a growing area in combinatorics, induced and non-induced poset saturation. We refer the reader to Gerbner, Keszegh, Lemons, Palmer, P{\'a}lv{\"o}lgyi and Patk{\'o}s \cite{gerbner2013saturating}, and Gerbner and Patk{\'o}s \cite{gerbner2018extremal} for nice introductions to the area, as well as to the paper of Keszegh, Lemons, Martin, P{\'a}lv{\"o}lgyi and Patk{\'o}s \cite{KESZEGH2021} for recent results on a variety of posets.
\par In this paper we determine the exact value for $k=5$ and $k=6$. We show that for $n\geq5$ we have $\text{sat}^*(n, \mathcal A_5)=4n-2$, and for $n\geq6$ we have $\text{sat}^*(n,\mathcal A_6)=5n-5$. Our starting strategy in each proof is to cover the saturated family with full chains and look at the number of sets on each level. If the number is too small, then we try to `deflect' a chain to add a set not in the family in such a way that everything is still covered by the initial number of chains, which will contradict the saturation property. We believe that this approach, combined with more structural knowledge of the family might lead to improvements in the lower bound for general $k$. \par To end the Introduction, we record two immediate observations that we will use several times. The first is that any $k$-antichain saturated family must contain $\emptyset$ and $[n]$. The second is the following.
\begin{lemma} If $\mathcal{F}$ is an induced $k$-antichain saturated family, then $\mathcal{F}$ is the union of $k-1$ full chains.

\label{fullchainlemma}

\end{lemma}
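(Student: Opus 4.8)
The plan is to lean entirely on the reformulation recorded in the introduction: by Dilworth's theorem a family contains no $k$ pairwise incomparable sets if and only if it is a union of at most $k-1$ chains, so being $k$-antichain saturated is precisely being \emph{maximal} among families of subsets of $[n]$ that are unions of $k-1$ chains. I would first upgrade this single-set maximality to inclusion-maximality: if $\mathcal F \subsetneq \mathcal G$ with $\mathcal G$ a union of $k-1$ chains, then picking any $X \in \mathcal G \setminus \mathcal F$, the family $\mathcal F \cup \{X\} \subseteq \mathcal G$ still has no $k$-antichain, so $X$ could have been added to $\mathcal F$ — contradicting saturation. Hence no union of $k-1$ chains properly contains $\mathcal F$.

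The heart of the proof is then a single move. Since $\mathcal F$ has no antichain of size $k$, Dilworth's theorem partitions it into chains $C_1,\dots,C_w$ with $w \le k-1$. Each $C_j$ is a chain in the Boolean lattice $2^{[n]}$, so it extends to a maximal chain $\widehat{C}_j$ of $2^{[n]}$, that is, to a full chain $\emptyset \subsetneq A_1 \subsetneq \dots \subsetneq [n]$ containing $C_j$; such an extension always exists in a finite lattice. Put $\widehat{\mathcal F} = \widehat{C}_1 \cup \dots \cup \widehat{C}_w$. By construction $\widehat{\mathcal F}$ is a union of $w \le k-1$ chains, so it has no $k$-antichain, and it contains $\mathcal F$ since $C_j \subseteq \widehat{C}_j$ for each $j$. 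By the inclusion-maximality established above, $\widehat{\mathcal F} = \mathcal F$. Therefore $\mathcal F = \widehat{C}_1 \cup \dots \cup \widehat{C}_w$ is a union of $w \le k-1$ full chains; repeating one of them to reach exactly $k-1$ (which leaves the union unchanged) shows $\mathcal F$ is a union of $k-1$ full chains, as required. As a byproduct this re-derives $\emptyset, [n] \in \mathcal F$, since every full chain contains both.

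The step I would expect to be the real obstacle is not any computation but the conceptual leap of extending the Dilworth chains \emph{in the ambient lattice}, possibly through sets lying outside $\mathcal F$, and only afterwards letting maximality collapse the enlarged family back onto $\mathcal F$. A naive attempt to show the chains are full directly — by filling a skipped level or a missing endpoint one set at a time — is doomed, because saturation forbids adding any single set; one must instead adjoin a whole full chain's worth of sets at once, so that the enlargement stays a union of $k-1$ chains and the maximality clause can be invoked. The two points that then need only routine verification are that a finite chain always embeds in a maximal chain, and that a union of at most $k-1$ chains automatically has no $k$-antichain irrespective of which sets the chains pass through; with these in hand the extension move does all the work, and no level-by-level or gap analysis is needed.
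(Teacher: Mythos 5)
Your proposal is correct and follows essentially the same route as the paper: apply Dilworth to partition $\mathcal F$ into at most $k-1$ chains, extend each to a full chain of $2^{[n]}$, observe the union is still $k$-antichain free, and invoke saturation/maximality to conclude the union equals $\mathcal F$. The only difference is that you spell out the inclusion-maximality upgrade and the chain-extension step in more detail than the paper does.
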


\begin{proof} By Dilworth's theorem, we may partition $\mathcal F$ into $k-1$ chains, and so $\mathcal F$ is certainly contained in the union of some $k-1$ full chains, say $\mathcal D_1,\ldots,\mathcal D_{k-1}$. But $\mathcal D_1\cup\ldots\cup\mathcal D_{k-1}$ is a $k$-antichain saturated family, so by maximality of $\mathcal F$ we must have that $\mathcal F=\mathcal D_1\cup\ldots\cup\mathcal D_{k-1}$.
\end{proof}

\section{5-antichain saturation}
\begin{theorem} For any positive integer $n\geq5$ we have $\text{sat}^{*}(n,\mathcal{A}_{5})=4n-2$.
\label{thmfork=5}
\end{theorem}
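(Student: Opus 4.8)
The plan is to prove the two bounds separately, with essentially all the difficulty in the lower bound. For the upper bound I would exhibit four full chains $\mathcal C_1,\dots,\mathcal C_4$ that pairwise meet only in $\emptyset$ and $[n]$ (possible since $n\ge 5$ leaves enough room on levels $1$ and $n-1$), and verify saturation directly: the family has width $4$, and if $X\notin\mathcal F$ lies on a level $j$ with $1\le j\le n-1$, then $X$ together with the four distinct level-$j$ sets of the chains forms a $5$-element antichain. Counting level by level, this family has $1+4(n-1)+1=4n-2$ sets, so $\mathrm{sat}^*(n,\mathcal A_5)\le 4n-2$.

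For the lower bound, let $\mathcal F$ be any $5$-antichain saturated family. By Lemma~\ref{fullchainlemma} it is a union of four full chains, so writing $a_i$ for the number of its sets on level $i$ we have $a_0=a_n=1$ and $1\le a_i\le 4$ for $1\le i\le n-1$; hence $|\mathcal F|=\sum_{i=0}^{n}a_i=2+\sum_{i=1}^{n-1}a_i\le 4n-2$. Thus it suffices to prove that every middle level is full, that is $a_i=4$ for all $1\le i\le n-1$, since this forces $|\mathcal F|=4n-2$ and completes the proof.

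Suppose for contradiction that some middle level $j$ has $a_j\le 3$. Fix a covering of $\mathcal F$ by four full chains; by pigeonhole two of them share their level-$j$ set $S$, say $\mathcal C_1$ and $\mathcal C_2$. The idea (``deflecting a chain'') is to reroute one chain through a new set while keeping the family covered by four full chains, hence of width at most $4$, contradicting saturation. Concretely, $C_1^{(j-1)}\subsetneq S\subsetneq C_1^{(j+1)}$, and the interval $[C_1^{(j-1)},C_1^{(j+1)}]$ contains exactly one other level-$j$ set $S^{\ast}$; if $S^{\ast}\notin\mathcal F$ then replacing $S$ by $S^{\ast}$ in $\mathcal C_1$ gives a full chain, $S$ stays covered by $\mathcal C_2$, and $\mathcal F\cup\{S^{\ast}\}$ is still a union of four full chains, giving the required contradiction.

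The main obstacle is precisely when this move is blocked, that is when $S^{\ast}\in\mathcal F$, and likewise for the analogous bypass set of $\mathcal C_2$. Here I would use that, since $a_j\le 3$, only the set $S$ is covered by two chains, so these are the \emph{only} single-level deflections available on level $j$; blocking them therefore forces additional sets onto level $j$ and rigidly constrains the four chains around level $j$. I would then eliminate this configuration either by a longer deflection that also reroutes through level $j-1$ or $j+1$, or by deflecting a chain through one of the newly forced sets, exploiting that with only four chains and $n\ge 5$ one can never simultaneously block every such move. It is exactly this blocking analysis that is delicate, and that just fails for $k=6$ (where a genuine saving of two sets appears near the top and bottom, giving $5n-5$); so the crux for $k=5$ is to show that no thin middle level can survive.
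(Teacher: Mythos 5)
Your upper bound and the reduction of the lower bound to ``every level $1\le i\le n-1$ contains exactly $4$ sets'' match the paper, and your single-level deflection (replace the shared set $S$ of $\mathcal C_1,\mathcal C_2$ by the unique other set $S^{\ast}$ in the interval $[C_1^{(j-1)},C_1^{(j+1)}]$, which works whenever $S^{\ast}\notin\mathcal F$) is exactly the paper's opening move. But there is a genuine gap: the entire content of the proof is the ``blocking analysis'' that you defer, and your sketch of it is not an argument. The assertion that ``with only four chains and $n\ge 5$ one can never simultaneously block every such move'' is precisely the theorem in disguise, and your auxiliary claim that ``only the set $S$ is covered by two chains'' is already false when $a_j\le 2$ (with $a_j=2$ one set can be covered by three chains, or both by two). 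The paper has to split into four cases according to the level profile ($a_j=1$; $a_j=2$ adjacent to a level with $\ge 3$; all levels equal to $3$; a level with $3$ adjacent to one with $4$), and in the latter two cases the blocked deflections force specific identities among the forced sets ($A'=A_2=A''$, or $B'\in\{A,A''\}$, etc.) whose incompatibility must be checked by hand.

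The most serious omission is the case in which \emph{every} middle level has exactly $3$ sets. There, no level is adjacent to a larger one, the deflection targets can all genuinely lie in $\mathcal F$, and no amount of single- or double-level rerouting of the kind you describe produces a new set. The paper handles this by an entirely different mechanism: an induction on $i$ showing that the four covering chains can be modified so that two of them coincide on all sets of size at most $i$, hence eventually coincide entirely, so $\mathcal F$ is covered by three full chains; then adding any missing set still gives a four-chain cover, contradicting saturation. Your proposal contains no substitute for this step, and without it the lower bound does not go through. (Your parenthetical about $k=6$ also misattributes the saving: the $5n-5$ versus $5n-3$ discrepancy there comes from levels $1$ and $n-1$ possibly having only $k-2$ sets, not from a failure of the deflection method in the middle.)
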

\begin{proof} Let $\mathcal{F}$ be an induced 5-antichain saturated family. By Lemma \ref{fullchainlemma} we can cover $\mathcal{F}$ with $4$ full chains $\mathcal{D}_{1},\ldots,\mathcal{D}_{4}$. For each $i \in \{ 1,\ldots,n-1\}$ let $\mathcal{F}_{i}$ be the collection of sets in $\mathcal{F}$ of size $i$, and $x_{i}=|\mathcal{F}_{i}|$. We will now examine the following 4 cases:\\
\par\textbf{Case 1.} There exists $i \in \{ 1,\ldots,n-1\}$ such that $x_{i}=1$.\\ Let $A$ be the unique set in $\mathcal{F}$ of size $i$. Since each of the chains $\mathcal{D}_{1},\ldots \mathcal{D}_{4}$ is a full chain, it follows that all of them must contain $A$. Consider the sets of size $i-1$ and $i+1$ in $\mathcal{D}_{1}$. They must be of the form $A\setminus \{x\}$ and $A\cup \{y\}$ respectively, for some $x\in A$ and $y\in \left[ n\right] \setminus A$. Let $A'=A\setminus \{x\}\cup \{y\}$. Since $A'\neq A$ and $|A'|=i$, $A'\notin \mathcal{F}$. On the other hand, by setting $\mathcal{D}'_1=\mathcal{D}_1\setminus\{A\}\cup\{A'\}$, we observe that the chains $\mathcal {D}'_1,\mathcal{D}_2,\mathcal{D}_3,\mathcal{D}_4$ cover $\mathcal{F}\cup \{A'\}$ (note that $A$ is still covered by $\mathcal{D}_{2}$). This implies that $\mathcal{F}\cup \{A'\}$ is $5$-antichain free, contradicting the fact that $\mathcal{F}$ is 5-antichain saturated.\\
\par\textbf{Case 2.} There is no $j$ such that $x_{j}=1$, but there exists $i$ such that $x_{i}=2$.\\Since $\text{sat}^*(n,\mathcal{A}_{5})\geq 3n-1$ we get that $|\mathcal F|\geq 3n-1$, thus there must be some $l \in \{1,\ldots,n-1\}$ for which $x_{l}\geq 3$. Combining this with the fact that there exist $i$ such that $x_{i}=2$ and $x_{m}\neq1$ for all $1\leq m\leq n-1$, we deduce that there exists some index $1\leq j\leq n-1$ such that $x_{j}=2$ and $x_{j+1}\geq 3$, or $x_{j}=2$ and $x_{j-1}\geq 3$. Since a family is antichain-saturated if and only if the family of the complements of its sets is antichain-saturated, we can assume without loss of generality that there exists $j$ such that $x_j=2$ and $x_{j+1}\geq3$. Let $A_{1}$ and $A_{2}$ be the two sets of size $j$. Since the 4 chains $\mathcal D_1,\ldots,\mathcal D_4$ that cover $\mathcal F$ are full, they have to go through $A_1$ and $A_2$ as well as cover the sets of size $j+1$. This implies that at least two chains with different sets of size $j+1$ have the same element of size $j$. Thus we can assume without loss of generality that these chains are $\mathcal D_1$ and $\mathcal D_2$, and $A_1\in\mathcal D_1,\mathcal D_2$. Let also $B_1$ and $B_2$ be the two (distinct) sets of size $j+1$ in these two chains respectively. Let $B_3$ be another set of size $j+1$ and assume without loss of generality that it is part of $\mathcal D_3$. We either have $A_2\in\mathcal D_3$, or $A_1\in\mathcal D_3$ which implies $A_2\in\mathcal D_4$. As $\mathcal D_4$ must contain an element of size $j+1$, we can assume, after relabelling if necessary, that $A_1\subset B_1,B_2$, and $A_2\subset B_3$, and $A_1,B_1\in\mathcal D_1$, and $A_1, B_2\in\mathcal D_2$, and $A_2, B_3\in\mathcal D_3$. Moreover, since $j\neq 0$, there exist sets $C_{1}, C_{2} \subseteq A_{1}$ of size $j-1$ that are part of the chains $\mathcal{D}_{1}$ and $\mathcal{D}_{2}$ respectively. Note that $C_1$ may be equal to $C_2$. Hence we can write $$C_{1}\cup \{c_{1}\}=A_{1}=B_{1}\setminus \{b_{1}\} \text{ and }  C_{2}\cup \{c_{2}\}=A_{1}=B_{2}\setminus \{b_{2}\},$$ where $b_{1}\neq b_{2} \in \left[ n \right] \setminus A_{1}$ and $c_{1}, c_{2} \in A_{1}$. Let $A'=A_{1}\setminus \{c_{1}\}\cup \{b_{1}\}$ and $A''=A_{1}\setminus \{c_{2}\}\cup \{b_{2}\}$. If $A' \notin \mathcal{F}$, then by modifying $\mathcal D_1$ by replacing $A_1$ with $A'$ we obtain a cover of $\mathcal{F}\cup \{A'\}$ with 4 chains, contradicting the fact that $\mathcal{F}$ is 5-antichain saturated. Thus $A'\in \mathcal{F}$, and similarly, $A'' \in \mathcal{F}$ too. Moreover, by construction, $|A'|=|A''|=j$ and  $A'\neq A_{1}\neq A''$. Because $\mathcal F$ contains exactly 2 sets of size $j$, we must have that $A'=A_{2}=A''$. However $A'$ contains $b_{1}$, while $A''$ does not, a contradiction.\par The picture below summarises the above analysis.

\begin{center}
\begin{tikzpicture}
\draw [red] (-2,1) -- (0,0);
\draw [red] (-2,-1.80) -- (0,-0.80);
\node at (0,-0.4) {$A_1$};
\draw [blue] (0,0) -- (2,1);
\draw [blue] (0,-0.8) -- (2,-1.8);
\node at (-2,1.5) {$B_1 = A_1\cup \{b_1\}$};
\node at (2,1.5) {$B_2 = A_1\cup \{b_2\}$};
\node at (-2,-2.3) {$C_1 = A_1\setminus\{c_1\}$};
\node at (2,-2.3) {$C_2 = A_1\setminus\{c_2\}$};
\node at (-2,2.25) {\color{red} $\mathcal{D}_1$};
\node at (2,2.25) {\color{blue} $\mathcal{D}_2$};
\draw (4,0) -- (5.5,1);
\node at (4,-0.35) {$A_2$};
\node at (5.5,1.35) {$B_3$};
\draw [dotted] (-4,0) -- (-2,1);
\draw [dotted] (-4,-0.8) -- (-2,-1.8);
\node at (-4,-0.4) {$A' = A_1\setminus\{c_1\}\cup \{b_1\}$};
\end{tikzpicture}
\end{center}

\par\textbf{Case 3.} For all $i \in \{1,\ldots, n-1\}$, $x_{i}=3$.\\ We will show that this implies that $\mathcal{F}$ can be covered by $3$ chains, contradicting the 5-saturation property of $\mathcal{F}$.\par We start with the $4$ full chains $\mathcal{D}_{1}, \ldots ,\mathcal{D}_{4}$ that cover $\mathcal F$. By modifying them if necessary, we can choose them in such a way that two of them coincide. Equivalently, we prove that for each $i\in\{0,\ldots,n\}$, two of these chains can be chosen to coincide on sets of size less than or equal to $i$. We proceed by induction on $i$.\par Clearly for $i=0$ all of $\mathcal{D}_{j}$ start with the empty set, so they all coincide on sets of size at most $0$. For $i=1$ we have three different options for the sets of size $1$ and $4$ chains, so two chains must coincide on sets of size at most 1.\par Let now $i>1$ and assume that we can cover $\mathcal F$ by 4 full chains, $\mathcal D^i_1,\mathcal D^i_2,\mathcal D^i_3,\mathcal D^i_4$, two of which coincide on sets of size less than $i$. Without loss of generality, $\mathcal{D}^i_{1}$ and $\mathcal{D}^i_{2}$ coincide on sets of size less than $i$. If they coincide on sets of size $i$, we are done. Thus we now assume that they do not, and let $A_{1}$ be the set of size $i$ in $\mathcal{D}^i_{1}$ and $A_{2}$ the set of size $i$ in $\mathcal{D}^i_{2}$. Let also $A_3$ be the third set of size $i$.\par If $\mathcal{D}^i_{3}$ contains $A_{1}$, then by replacing the sets of size not more than $i$ in the chain $\mathcal{D}^i_{1}$ with the sets of size not more than $i$ in $\mathcal{D}^i_{3}$, we obtain a cover of $\mathcal{F}$ by $4$ chains, two of which coincide on all sets of size less than or equal to $i$, so we are done. Similarly we are done if any of $A_{2}\in \mathcal{D}^i_{3}$, $A_{1}\in \mathcal{D}^i_{4}$ or $A_{2}\in \mathcal{D}^i_{4}$ holds. Therefore we may assume that $A_{3} \in \mathcal{D}^i_{3},\mathcal{D}^i_{4}$.\par Let $B$ be the set of size $i-1$ in chains $\mathcal{D}^i_{1}$ and $\mathcal{D}^i_{2}$. Then $A_{1}$ must be of the form $B\cup \{x\}$ for some $x\in \left[ n\right]\setminus B$. Similarly, $A_{2}=B\cup \{y\}$ for some $y\in \left[ n\right]\setminus B$. We observe that $x\neq y$ as $A_{1}\neq A_{2}$. For any $b \in B$, let $X_{b}=B\cup \{x\} \setminus \{b\}$ and $Y_{b}=B\cup \{y\} \setminus \{b\}$. We observe that the family $\mathcal S=\{X_b, Y_b:b\in B\}$ has size $2|B|=2(i-1)$ since the $X$'s are pairwise distinct, the $Y$'s are pairwise distinct, and $X_b\neq Y_{b'}$ for any $b,b'\in B$ (as one set contains $x$, but the other does not). Moreover, all sets in $\mathcal S$ have size $i-1$ and $B\notin\mathcal S$.\par If $i\geq 3$, then $2(i-1)\geq 4 >2$, and since there are exactly $2$ sets of size $i-1$ in $\mathcal{F}$ that are not equal to $B$, at least one of the sets in $\mathcal S$ is not in $\mathcal{F}$. Without loss of generality, assume $X_{b}\notin\mathcal F$ for some $b\in B$. However, by removing all sets of size less than $i$ from $\mathcal{D}^i_{1}$ and adding $X_{b}$ to it, we obtain a $4$-chain cover of $\mathcal{F}\cup \{X_{b}\}$, which contradicts the fact that $\mathcal{F}$ is 5-antichain saturated.
\par If $i=2$, then $B=\{b\}$ for some $b\in[n]$, and so $A_{1}=\{b,x\}$, $A_{2}=\{b,y\}$, $X_{b}=\{x\}$ and $Y_{b}=\{y\}$. As in the above case, if $\{x\}\notin \mathcal{F}$ or $\{y\}\notin \mathcal{F}$ we obtain a contradiction. Thus we must have $\{x\},\{y\}\in \mathcal{F}$. Without loss of generality we can assume that $\{x\}\in \mathcal{D}_{3}$ and $\{y\} \in \mathcal{D}_{4}$. As argued previously, we must have $A_{3}\in\mathcal D^2_3,\mathcal D^2_4$, which immediately implies that $A_{3}=\{x,y\}$. Now we modify the chains as follows: we set $\mathcal{D}^3_{1}=\mathcal D^2_1$, $\mathcal{D}^3_{3}=\mathcal D^2_3$, $\mathcal D^3_2=\mathcal D^2_2\setminus\{\{b\}\}\cup\{\{y\}\}$ and $\mathcal D^3_4=\mathcal D^2_4\setminus\{\{y\}\}\cup\{\{x\}\}$. This forms a cover of $\mathcal F$ by 4 full chains such that $D^3_{3}$ and $D^3_{4}$ coincide on all sets of size not greater than $2$. Thus the induction induction step is complete.\\ 
\par\textbf{Case 4.} There exist $j,t \in \{1,\ldots, n-1\}$ such that $x_{j}=3$ and $x_{t}=4$.\\ We know that no $x_{i}$ is equal to $1$ or $2$ for $i\in \{ 1, \ldots n-1\}$, thus there must exist an index $l$ such that $x_{l}=3$ and $x_{l+1}=4$, or $x_{l}=3$ and $x_{l-1}=4$. As in previous cases, we can assume without loss of generality that there exists $l$ such that $x_l=3$ and $x_{l+1}=4$. Let $A$, $B$ and $C$ be the sets of size $l$ in $\mathcal{F}$. Since $\mathcal{F}$ is covered by the 4 full chains $\mathcal D_1,\ldots,\mathcal D_4$, these 4 chains have to go through the 4 distinct sets of size $l+1$ in $\mathcal{F}$. Moreover, since there are exactly 3 sets of size $l$, we must have that two chains go through the same set of size $l$, while the other two chains go through the remaining sets of size $l$. Putting this together, we can assume without loss of generality that $A\in\mathcal D_1, \mathcal D_2$, $B\in\mathcal D_3$ and $C\in\mathcal D_4$. Furthermore, the sets of size $l+1$ are of the form $A\cup\{a_{1}\}\in\mathcal D_1$, $A\cup\{a_{2}\}\in\mathcal D_2$, $B\cup\{b\}\in\mathcal D_3$ and $C\cup\{c\}\in\mathcal D_4$, where $a_{1},a_{2} \in \left[ n \right] \setminus A$ and $a_1\neq a_2$, $b \in \left[ n \right] \setminus B$ and $c \in \left[ n \right] \setminus C$.\par We now consider the sets of size $l-1$ corresponding to these chains. They must be of the form $A\setminus\{a_{1}'\}\in\mathcal D_1$, $A\setminus\{a_{2}'\}\in\mathcal D_2$, $B\setminus\{b'\}\in\mathcal D_3$ and $C\setminus\{c'\}\in\mathcal D_4$, where $a_{1}',a_{2}'\in A$, $b'\in B$ and $c'\in C$. We note that these sets need not be distinct.\par Let $A'=A\setminus \{ a_{1}'\} \cup \{a_{1}\}$ and $A''=A\setminus \{ a_{2}'\} \cup \{a_{2}\}$. It is clear that $A\neq A'$, $A\neq A''$ and $A'\neq A''$, thus $A, A', A''$ are 3 distinct sets of size $l$. If $A'\notin\mathcal F$, then by replacing $A$ with $A'$ in the chain $\mathcal{D}_{1}$ we obtain a cover of $\mathcal{F}\cup\{A'\}$ by 4 chains, which contradicts the fact that $\mathcal{F}$ is 5-antichain saturated. Thus we must have $A'\in\mathcal F$, and since it has size $l$, $A'=B$ or $A'=C$. Similarly we get that $A''\in\mathcal F$. Therefore, the 3 sets of size $l$ in our family are $A$, $A'$ and $A''$, and we assume without loss of generality that $B=A'$ and $C=A''$.\par Let $B'=B\setminus\{a_{1}\}\cup \{b\}$. It is clear that $B\neq B'$. If $B' \notin \mathcal{F}$, then by leaving the chains $\mathcal{D}_{2}$ and $\mathcal{D}_{4}$ unchanged, swapping the sets of size less than $l$ between the chains $\mathcal{D}_{1}$ and $\mathcal{D}_{3}$, then replacing $A$ with $B'$ in chain $\mathcal{D}_{3}$, and $A$ with $B$ in chain $\mathcal D_1$, we obtain a cover of $\mathcal{F}\cup \{B'\}$ with 4 full chains. This implies that $\mathcal{F}\cup \{B'\}$ is still 5-antichain free, a contradiction. Hence $B'\in \mathcal{F}$ and thus it has to be equal to either $A$ or $A''$.\par The picture below illustrates the cover of $\mathcal F$ by the modified 4 chains: $\mathcal D_1', \mathcal D_2, \mathcal D'_3, \mathcal D_4$.
\begin{center}
\begin{tikzpicture}
\draw [olive] (0,0) -- (0,1);
\draw [olive] (0,-0.8) -- (0,-1.8);
\draw [dotted, violet] (0,0) -- (2,1);
\draw [dotted, violet] (0.1,-0.8) -- (0.1,-1.8);
\draw [blue] (2,1) -- (3,0);
\draw [blue] (2,-1.8) -- (3,-0.8);
\draw [red] (3,0) -- (4,1);
\draw [red] (3,-0.8) -- (4,-1.8);
\draw [dotted] (4,1) -- (6,0);
\draw [dotted] (4,-1.8) -- (6,-0.8);
\draw (6,0) -- (6,1);
\draw (6,-0.8) -- (6,-1.8);
\draw [dotted, orange] (2,-1.8) -- (-4,-0.8);
\draw [dotted, orange] (0,1) -- (-4,0);
\node at (3,-0.4) {$A$};
\node at (2,1.5) {$A\cup \{a_1\}$};
\node at (2,2.25) {\color{blue} $\mathcal{D}_1$};
\node at (2,-2.3) {$A\setminus\{a'_1\}$};
\node at (4,1.5) {$A\cup \{a_2\}$};
\node at (4,2.25) {\color{red} $\mathcal{D}_2$};
\node at (4,-2.3) {$A\setminus\{a'_2\}$};
\node at (6,1.5) {$C\cup \{c\}$};
\node at (6,2.25) {$\mathcal{D}_4$};
\node at (6,-2.3) {$C\setminus\{c'\}$};
\node at (6,-0.4) {$C = A\cup \{a_2\}\setminus\{a_2'\}$};
\node at (0,1.5) {$B\cup \{b\}$};
\node at (0,2.25) {\color{olive} $\mathcal{D}_3$};
\node at (0,-2.3) {$B\setminus\{b'\}$};
\node at (0.75,0.75) {\color{violet} $\mathcal{D}_1'$};
\node at (-4,-0.4) {$B' = B\cup \{b\}\setminus\{a_1\}$};
\node at (0,-0.4) {$B = A\cup \{a_1\}\setminus\{a'_1\}$};
\node at (-2,0.8) {\color{orange} $\mathcal{D}_3'$};
\end{tikzpicture}
\end{center}
\par We now examine the two cases:
\begin{enumerate}
\item[(a)] If $B'=A$, then $A=(A\setminus \{ a_{1}'\} \cup \{a_{1}\})\setminus\{a_{1}\}\cup \{b\}=A\setminus \{a_{1}'\} \cup \{b\}$, which implies that $a_{1}'=b$. It then follows that $B\cup\{b\}=(A\setminus \{ a_{1}'\} \cup \{a_{1}\})\cup\{a_{1}'\}=A\cup\{a_{1}\}.$ This contradicts the original assumption that these 4 sets of size $l+1$ are distinct.
\item[(b)] If $B'=C$, let $C'=C\setminus\{a_{2}\}\cup \{c\}$. By the same reasoning as above $C'\in \mathcal{F}$ and $C'\neq A$, thus we must have $C'=B$.\\ From $B'=C$ we get that $(A\setminus \{ a_{1}'\} \cup \{a_{1}\})\setminus\{a_{1}\}\cup \{b\}=A\setminus \{ a_{2}'\} \cup \{a_{2}\}$, which implies that $ b=a_{2}$ and $a'_1=a'_2$. Similarly, from $C'=B$ we get that $c=a_1$. This implies that $B\cup \{b\}=(A\setminus\{a_{1}'\}\cup\{a_{1}\})\cup \{a_{2}\}=(A\setminus\{a_{1}'\}\cup\{a_{2}\})\cup \{a_{1}\}=C \cup \{c\}$, which contradicts the assumption that there are 4 sets of size $l+1$.
\end{enumerate}
\par We conclude that none of the 4 cases analysed above is possible, thus we deduce that $x_{i}=4$ for all $i\in \{1, \ldots ,n-1\}$. We already know that $x_{0}=x_{n}=1$, thus $|\mathcal F|\geq 4n-2$. This implies that $\text{sat}^*(n, \mathcal A_5)\geq 4n-2$ for $n\geq5$. On the other hand, a family of 4 full chains that only intersect at $\emptyset$ and $\left[ n\right]$ is 5-antichain saturated and has size $4n-2$, thus $\text{sat}^*(n, \mathcal A_5)\leq 4n-2$, which finishes the proof.\end{proof}
\section{6-antichain saturation}
The proof presented in this section is very similar to the proof of Theorem \ref{thmfork=5}. We therefore focus only on the parts that are specific to the 6-antichain and, where necessary, direct the reader to the analogous parts in the previous proof.
\begin{theorem} For every positive integer $n\geq 6$ we have $\text{sat}^{*}(n,\mathcal{A}_{6})=5n-5$.
\end{theorem}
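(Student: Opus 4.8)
The plan is to run the argument of Theorem~\ref{thmfork=5} with four chains replaced by five. Let $\mathcal{F}$ be an induced $6$-antichain saturated family; by Lemma~\ref{fullchainlemma} we cover it by five full chains $\mathcal{D}_1,\dots,\mathcal{D}_5$, and we write $x_i=|\mathcal{F}_i|$ for the number of sets of size $i$, so that $x_0=x_n=1$ and $x_i\le 5$ throughout, and as before we are free to pass to complements. Since $5n-5=2+\bigl(5(n-1)-2\bigr)$, proving the lower bound amounts to showing that the total deficit $\sum_{i=1}^{n-1}(5-x_i)$ is at most $2$; in other words, beyond their forced meetings at $\emptyset$ and $[n]$, the five chains may merge only twice.

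First I would rule out small levels exactly as in Cases~1 and~2. If $x_i=1$, the unique set of size $i$ lies on all five chains, and the Case~1 deflection (swapping one element out and one in) produces a new set of size $i$ outside $\mathcal{F}$ while keeping a five-chain cover, contradicting saturation. If $x_i=2$, the five chains distribute over the two sets $A_1,A_2$ of size $i$, so one of them, say $A_1$, lies on at least two chains continuing to distinct sets of size $i+1$; deflecting through the sets of size $i-1$ below these chains forces two distinct new sets $A',A''$ of size $i$ into $\mathcal{F}$, which is impossible since only $A_2$ remains. The sole extra bookkeeping over $k=5$ is that there are more ways for five chains to be shared among two sets, but each yields the same contradiction. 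Hence $x_i\in\{3,4,5\}$ for every $i$.

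The genuinely new part is bounding the deficit once every level lies in $\{3,4,5\}$, since the $k=5$ argument pins each level to a single value and that no longer happens here. The key observation is that any configuration of total deficit at least $3$ contains, among its deficient levels, one of three minimal patterns: two levels with $x_i=3$; a level with $x_i=3$ together with a level with $x_j=4$; or three levels with $x_i=4$. It therefore suffices to exclude each of these, and I would do so by the device of Case~3: using deflections to modify the five chains so that two of them coincide on all sets up to a prescribed size, then showing that this excess of merging can be propagated upward until two chains coincide all the way to $[n]$, producing a cover of $\mathcal{F}$ by only four full chains. This is impossible for a maximal $6$-antichain-free family, since a union of four chains is $5$-antichain-free, so adjoining any set leaves it a union of five chains, hence $6$-antichain-free, contradicting maximality. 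The hard point is that the merges creating the deficit may sit at widely separated levels and involve different pairs of chains, so the induction maintaining ``two chains coincide below level $i$'' must show that three scattered units of merging can always be realigned into a single global coincidence; this long-range propagation is the main obstacle, and it has no analogue in the cleaner $k=5$ case.

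Granting the deficit bound gives $|\mathcal{F}|\ge 5n-5$. For the matching upper bound I would exhibit five full chains that are disjoint in the middle but merge once just above $\emptyset$ and once just below $[n]$, so that $x_1=x_{n-1}=4$ and $x_i=5$ otherwise; this family has exactly $5n-5$ sets, and a direct check that no further set can be added without creating a $6$-antichain shows it is $6$-antichain saturated, completing the proof.
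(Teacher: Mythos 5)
Your reduction of the lower bound to the statement that the total deficit $\sum_{i=1}^{n-1}(5-x_i)$ is at most $2$ is a reasonable reformulation, but the mechanism you propose for excluding the deficit patterns cannot work, and the step you yourself flag as ``the main obstacle'' is precisely where the real content lies. The contradiction you aim for --- two chains made to coincide globally, hence a cover of $\mathcal F$ by four full chains --- is only available when \emph{every} level of $\mathcal F$ has at most $4$ sets: as soon as some level contains $5$ sets, those $5$ sets form an antichain and no cover by four chains exists, so no amount of realigning merges will produce one. Your plan can therefore at best handle the pattern in which all levels have size $4$ (the paper's analogue of Case~3), and it cannot touch the mixed patterns where a deficient level coexists with levels of size $5$ --- exactly the configurations that dominate a family of size near $5n-5$. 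The paper deals with these quite differently and locally: it first rules out $x_i\in\{1,2,3\}$ by the deflection arguments of Cases~1 and~2 (note that you stop at $x_i\le2$, so your framework still tolerates a level of size $3$), and then, for adjacent levels with $x_l=4$ and $x_{l+1}=5$, performs a structural analysis of the sets of sizes $l-1$, $l$, $l+1$ (the sets $A'$, $A''$, $B'$, $C'$) showing that all four sets of size $l$ have the form $W\cup\{w_i\}$ for a single set $W$ of size $l-1$, which forces $x_{l-1}=1$ and hence $l=1$ (and $l=n-1$ after complementation). This gives $x_i=5$ for $2\le i\le n-2$ and $x_1,x_{n-1}\ge4$, i.e.\ deficit at most $2$, with no global chain propagation needed. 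As it stands, your lower-bound argument is a plan built around a tool that fails in the main case, so there is a genuine gap.

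A smaller issue: the upper bound requires a specific construction, not merely ``five chains merging once at the bottom and once at the top.'' The paper takes the four singletons $\{1\},\{2\},\{3\},\{4\}$ and five of the six pairs inside $\{1,2,3,4\}$, so that any new singleton $\{k\}$ with $k\ge5$ is incomparable to all five sets of size $2$; an arbitrary merging pattern need not have this property, so the ``direct check'' you defer is exactly where the choice of construction matters.
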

\begin{proof} Let $\mathcal{F}$ be an induced $6$-antichain saturated family of subsets of $[n]$. By Lemma \ref{fullchainlemma}, we can cover $\mathcal{F}$ with $5$ full chains $\mathcal{D}_{1},\ldots,\mathcal{D}_{5}$. Let $x_{0}, \ldots ,x_{n}$ be the numbers of sets of sizes $0,\ldots , n$ respectively in $\mathcal{F}$. In the same way as in the proof of Theorem \ref{thmfork=5}, we deduce that we cannot have $x_{i}\in\{1,2,3\}$ for any $i\in \{1, \ldots, n-1\}$.\par The case when $x_{i}=4$ for all $i\in \{1, \ldots, n-1\}$ is completely analogous to Case 3 in the proof of Theorem \ref{thmfork=5}, except for the base case $i=2$ of the induction. More precisely, we need to show that if the 5 full chains cover $\mathcal F$ and two of them agree on sets of size at most 1, then we can modify them in such a way that they still cover $\mathcal F$ (and are full chains) and two of them coincide on sets of size at most 2. The figures below are the two situations where we need to modify the chains. The colour coded figures are enough to show that this is possible. For the left figure we note that it is easy to show, and the same argument has been done in the previous section, that $\{x\}$ and $\{y\}$ are in $\mathcal F$, thus one of them is in $\mathcal D_3$ or $\mathcal D_4$. Without loss of generality we assume $\{x\}\in\mathcal D_3$.
\begin{figure}[h]
	\centering
	\begin{minipage}{0.5\textwidth}
	\centering
		\begin{tikzpicture}
			\draw [red] (-2,2) -- (-2,0) -- (0,-2);
			\draw [olive] (-1,2) -- (-1.9,0.05) -- (0,-1.9); 
			\draw [magenta] (1,2) -- (0,0) -- (0,-2);
			\draw [blue] (1,2) -- (2,0) -- (0,-2);
			\draw [black] (3,2) -- (3,0) -- (0,-2);
			\node at (-2,2.25) {\footnotesize$\{a,x\}$};
			\node at (-1,2.25) {\footnotesize$\{a,y\}$};
			\node at (1,2.25) {\footnotesize$\{b,c\}$};
			\node at (3,2.25) {\footnotesize$\{d,e\}$};
			\node at (-2.3,0) {\footnotesize$\{a\}$};
			\node at (0.8,0) {\footnotesize$\{b\}=\{x\}$};
			\node at (2.3,0) {\footnotesize$\{c\}$};
			\node at (3.3,0) {\footnotesize$\{d\}$};
			\node at (0,-2.3) {\footnotesize$\emptyset$};
			\node at (-2.3,1.25) {\color{red}\footnotesize $\mathcal{D}_1$};
			\node at (-1,1.25) {\color{olive}\footnotesize$\mathcal{D}_2$};
			\node at (0.3,1.25) {\color{magenta}\footnotesize$\mathcal{D}_3$};
			\node at (1.75,1.25) {\color{blue}\footnotesize$\mathcal{D}_4$};
			\node at (3.3,1.25) {\footnotesize$\mathcal{D}_5$};
			
			\draw[dotted, orange] (-0.1,-1.9) -- (-0.1,0) -- (-2,2);
			\draw[dotted, purple] (0.1,-1.8) -- (1.9,0) -- (0.95,1.95);
			\node at (-0.4,-0.65) {\color{orange}\footnotesize$\mathcal{D}_1'$};
			\node at (0.8,-0.65) {\color{purple}\footnotesize$\mathcal{D}_3'$};
		\end{tikzpicture}
	\end{minipage}
	\begin{minipage}{0.49\textwidth}
		\centering
			\begin{tikzpicture}
		\draw [red] (-2,2) -- (-2,0) -- (0,-2);
		\node at (-2,2.25) {\footnotesize $\{a,x\}$};
		\node at (-2.3,1.25) {\color{red}\footnotesize$\mathcal{D}_1$};
		\draw [olive] (-1,2) -- (-1.9,0.05) -- (-0.1,-1.8);
		\node at (-1,2.25) {\footnotesize$\{a.y\}$};
		\node at (-1.65,1.25) {\color{olive}\footnotesize$\mathcal{D}_2$};
		\draw [magenta] (-1,2) -- (-1,0) -- (0,-2);
		\node at (-0.7,1.25) {\color{magenta}\footnotesize$\mathcal{D}_3$};
		\draw [blue] (1,2) -- (1,0) -- (0,-2);
		\node at (0.7,1.25) {\color{blue}\footnotesize$\mathcal{D}_4$};
		\draw (2,2) -- (2,0) -- (0,-2);
		\node at (1.7,1.25) {\footnotesize$\mathcal{D}_5$};
		
		\node at (-2.3,0) {\footnotesize $\{a\}$};
		\node at (0,-2.3) {\footnotesize $\emptyset$};
		
		\draw [dotted, orange] (-1.05,1.9) -- (-1.1,0) -- (-0.1,-1.9);
		\node at (-1.35,0.25) {\footnotesize\color{orange}$\mathcal{D}_2'$};
	\end{tikzpicture}
	\end{minipage}
\end{figure}
\par Finally, suppose that there exist an index $i$ such that $x_{i}=4$ and an index $j$ such that $x_{j}=5$. Since all $x_{k}$ are either 4 or 5 for $0<k<n$, there exists some $l\in \{1, \ldots, n-1\}$ such that $x_{l}=4$ and $x_{l+1}=5$, or $x_l=4$ and $x_{l-1}=5$. As before, we can assume without loss of generality that there exists $l$ such that $x_l=4$ and $x_{l+1}=5$. Let $A$, $B$, $C$ and $D$ be the sets of size $l$ in $\mathcal{F}$. Since there are 4 sets of size $l$ and all 5 chains must go through them and also cover them, it follows that exactly two chains have the same element of size $l$. On the other hand there are 5 elements of size $l+1$, thus each of them belongs to exactly one of the 5 full chains. Putting this together we can assume without loss of generality that $A\in\mathcal D_1,\mathcal D_2$, and $B$, $C$ and $D$ are part of the chains $\mathcal D_3$, $\mathcal D_4$ and $\mathcal D_5$ respectively. Let $A\cup\{a_{1}\}$, $A\cup\{a_{2}\}$, $B\cup\{b\}$, $C\cup\{c\}$ and $D\cup\{d\}$ be the 5 elements of size $l+1$ in the chains $\mathcal{D}_{1}, \ldots ,\mathcal{D}_{5}$ respectively, where $a_1\neq a_2$.\par We define the sets $A'$ and $A''$ as in Case 4 of Theorem \ref{thmfork=5} and deduce by the same exact argument that they both belong to $\mathcal{F}$. Thus, we may assume without loss of generality that $B=A'$ and $C=A''$. We also define $B'$ and $C'$ as in the previous section and deduce in the same way that both $B'$ and $C'$ belong to $\mathcal{F}$. The sets of size $l$ are $A, A', A''$ and $D$, two of which have to be $B'$ and $C'$. By the analogue of the subcases (a) and (b) of Case 4 in the previous section, we have that $B'\neq A$, $B'\neq B=A'$, $C'\neq A$, $C'\neq C$, and  $B'=C$ and $C'=B$ cannot both hold. Thus we deduce that either $B'=D$ or $C'=D$. Without loss of generality assume $C'=D$. Moreover, we either have $B'=C$ or $B'=D=C'$. It is an easy exercise to see that both cases imply that $a_1'=a_2'$, and either $b=a_2$ or $b=c$.\par Let $W=A\setminus \{a_{1}'\} \in \mathcal{F}$. We observe that the 4 sets of size $l$ are $W\cup\{w_1\}$, $W\cup\{w_2\}$, $W\cup\{w_3\}$ and  $W\cup\{w_4\}$, where $w_{1}, \ldots, w_{4}$ are $a_{1}, a_{2}, a_{1}'$ and $c$ in some order. We note that each of these sets has at least two supersets of size $l+1$ in $\mathcal{F}$ -- for example $W\cup\{c\}=C'$ is comparable to both $C\cup\{c\}$ and $D\cup\{d\}$. This immediately tells us that for every $i$ we can can easily construct full chains $\mathcal{C}_{1},\ldots,\mathcal{C}_{5}$ that cover $\mathcal F$ such that two of these chains go through the set $W \cup \{w_{i}\}$. On the other hand, we have that $a_1'=a_2'$, which tells us that the two chains that coincide on level $l$ must also coincide on level $l-1$ and, more importantly, their common set of size $l-1$ has to be a subset of all 4 sets of size $l$. Combining everything we see that this implies that we must have only one set of size $l-1$ in our family, thus $l=1$. In the analogue case where $x_l=4$ and $x_{l-1}=5$, we get $l=n-1$. To summarise, $x_{i}=5$ for all $i\in \{2, \ldots ,n-2\}$, $x_1\geq 4$, $x_{n-1}\geq 4$ and $x_0=x_n=1$. Therefore we have that $|\mathcal{F}| \geq 5n-5$.\par We are left to show that this bound is achieved for every $n\geq6$. Let $\mathcal F$ be the following family:$$\mathcal{F}=\{ \emptyset, \{1\}, \{2\}, \{3\}, \{4\}, \left[n\right] \setminus \{1\},\left[n\right] \setminus \{2\}, \left[n\right] \setminus \{3\}, \left[n\right] \setminus \{4\}, $$ $$ \{1,2\}, \{1,2,5\}, \{1,2,5,6\}, \ldots , \left[n\right] \setminus \{3,4\},$$ $$ \{1,3\}, \{1,3,5\}, \{1,3,5,6\}, \ldots , \left[n\right] \setminus \{2,4\}, $$ $$\{2,3\}, \{2,3,5\}, \{2,3,5,6\}, \ldots , \left[n\right] \setminus \{1,4\},$$ $$\{4,3\}, \{4,3,5\}, \{4,3,5,6\}, \ldots , \left[n\right] \setminus \{1,2\},$$ $$\{4,2\}, \{4,2,5\}, \{4,2,5,6\}, \ldots , \left[n\right] \setminus \{1,3\}\}.$$ \par This family is pictured below.
\begin{center}
\begin{tikzpicture}[scale=0.9]
\node at (0,-0.4) {$\emptyset$}; 
\draw (-1,1) -- (0,0) -- (-3,1);
\draw (1,1) -- (0,0) -- (3,1);
\node at (-3,1.4) {$\{1\}$};
\node at (-1,1.4) {$\{2\}$};
\node at (1,1.4) {$\{3\}$};
\node at (3,1.4) {$\{4\}$};
\draw (-4,2.8) -- (-3,1.8) -- (-2,2.8) -- (-1,1.8) -- (0,2.8) -- (1,1.8) -- (2,2.8) -- (3,1.8) -- (4,2.8);
\draw (-4,2.8) -- (1,1.8);
\draw (-1,1.8) -- (4,2.8);
\node at (-4,3.2) {$\{1,3\}$};
\node at (-2,3.2) {$\{1,2\}$};
\node at (0,3.2) {$\{2,3\}$};
\node at (2,3.2) {$\{3,4\}$};
\node at (4,3.2) {$\{2,4\}$};
\draw (-4,3.6) -- (-4,4.4);
\draw (-2,3.6) -- (-2,4.4);
\draw (0,3.6) -- (0,4.4);
\draw (2,3.6) -- (2,4.4);
\draw (4,3.6) -- (4,4.4);
\node at (-4,4.8) {$\{1,3,5\}$};
\node at (-2,4.8) {$\{1,2,5\}$};
\node at (0,4.8) {$\{2,3,5\}$};
\node at (2,4.8) {$\{3,4,5\}$};
\node at (4,4.8) {$\{2,4,5\}$};
\draw (-4,5.2) -- (-4,6);
\draw (-2,5.2) -- (-2,6);
\draw (0,5.2) -- (0,6);
\draw (2,5.2) -- (2,6);
\draw (4,5.2) -- (4,6);
\node at (-4,6.4) {$\{1,3,5,6\}$};
\node at (-2,6.4) {$\{1,2,5,6\}$};
\node at (0,6.4) {$\{2,3,5,6\}$};
\node at (2,6.4) {$\{3,4,5,6\}$};
\node at (4,6.4) {$\{2,4,5,6\}$};
\draw (-4,6.8) -- (-4,7.2);
\draw [dotted] (-4,7.2) -- (-4,7.6);
\draw (-4,7.6) -- (-4,8);
\draw (-2,6.8) -- (-2,7.2);
\draw [dotted] (-2,7.2) -- (-2,7.6);
\draw (-2,7.6) -- (-2,8);
\draw (0,6.8) -- (0,7.2);
\draw [dotted] (0,7.2) -- (0,7.6);
\draw (0,7.6) -- (0,8);
\draw (2,6.8) -- (2,7.2);
\draw [dotted] (2,7.2) -- (2,7.6);
\draw (2,7.6) -- (2,8);
\draw (4,6.8) -- (4,7.2);
\draw [dotted] (4,7.2) -- (4,7.6);
\draw (4,7.6) -- (4,8);
\node at (-4,8.4) {$[n]\setminus\{2,4\}$};
\node at (-2,8.4) {$[n]\setminus\{3,4\}$};
\node at (0,8.4) {$[n]\setminus\{1,4\}$};
\node at (2,8.4) {$[n]\setminus\{1,2\}$};
\node at (4,8.4) {$[n]\setminus\{1,3\}$};
\draw (3,9.8) -- (-4,8.8) -- (-1,9.8);
\draw (1,9.8) -- (-2,8.8) -- (3,9.8);
\draw (-3,9.8) -- (0,8.8) -- (3,9.8);
\draw (-3,9.8) -- (2,8.8) -- (-1,9.8);
\draw (-3,9.8) -- (4,8.8) -- (1,9.8);
\node at (-3,10.2) {$[n]\setminus\{1\}$};
\node at (-1,10.2) {$[n]\setminus\{2\}$};
\node at (1,10.2) {$[n]\setminus\{3\}$};
\node at (3,10.2) {$[n]\setminus\{4\}$};
\draw (-3,10.6) -- (0,11.6) -- (-1,10.6);
\draw (3,10.6) -- (0,11.6) -- (1,10.6);
\node at (0,12) {$[n]$};
\end{tikzpicture}
\end{center}
\par It is easy to see that $\mathcal{F}$ is $6$-antichain free as it is covered by 5 full chains, and that it has size $1+4+1+4+5(n-3)=5n-5$. We now prove that whenever we add a set to $\mathcal F$ we create a 6-antichain.\par Let $X\notin\mathcal F$. If $|X|\in\{2, \ldots, n-2\}$, then $X$ will form a 6-antichain with the 5 sets in $\mathcal F$ that have the same size as $X$. If $X=\{k\}$ for $k \notin \{1,2,3,4\}$, then $X$ will form a 6-antichain with the sets of size $2$ in $\mathcal{F}$. Similarly, if $X$ is the complement of a singleton, it will form a 6-antichain with the sets of size $n-2$ in $\mathcal F$.\par This proves that $\mathcal F$ is 6-antichain saturated. Thus $\text{sat}^*(n, \mathcal A_6)=5n-5$ for all $n\geq6$.
\end{proof}
\section{Further work}
Although the saturation number for the $k$-antichain is known to be roughly between $(k-1)n$ and $((k-1)/\log_2 {(k-1)})n$, the exact coefficient of $n$ is not known for general $k$. We believe that the following conjecture is true, strengthening the conjecture in $\cite{Ferrara2017TheSN}$ that $\text{sat}^*(n, \mathcal A_{k})=(k-1)n(1+o(1))$.
\begin{conjecture} For each fixed positive integers $k$ we have $\text{sat}^*(n,\mathcal{A}_{k})=n(k-1)-O(1)$.
\end{conjecture}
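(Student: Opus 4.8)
The plan is to push the chain-deflection method used above for $k=5,6$ to general $k$. The first point is that the upper bound $\text{sat}^*(n,\mathcal A_k)\le (k-1)(n-1)+2$ recorded in the Introduction is already of the desired shape $n(k-1)-O(1)$; so the entire content of the conjecture is the matching lower bound $\text{sat}^*(n,\mathcal A_k)\ge (k-1)n-C_k$ for some constant $C_k$ depending only on $k$. Fix an induced $k$-antichain saturated family $\mathcal F$ and, by Lemma \ref{fullchainlemma}, cover it by $k-1$ full chains. Writing $x_i$ for the number of sets of size $i$ and $d_i=(k-1)-x_i\ge 0$ for the \emph{deficiency} at level $i$ (note $x_i\le k-1$ always, since the $k-1$ chains meet each level), one has $|\mathcal F|=(k-1)(n+1)-\sum_{i=0}^n d_i$. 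Hence the conjecture is equivalent to the assertion that the total deficiency $\sum_i d_i$ is bounded by a constant depending only on $k$.

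\textbf{The deflection engine.} A level $i$ with $d_i>0$ is precisely a level at which two of the chains share a set $A$. If a chain $\mathcal D_j$ passes through such a merge set $A$, then, writing $U\subset A\subset V$ for its neighbours at levels $i-1$ and $i+1$, we have $U\subset V$ with $|V\setminus U|=2$, so there is exactly one set $X\ne A$ with $U\subset X\subset V$ and $|X|=i$. If $X\notin\mathcal F$, re-routing $\mathcal D_j$ through $X$ (leaving $A$ covered by the other chain) yields a cover of $\mathcal F\cup\{X\}$ by $k-1$ chains, contradicting saturation. Thus \emph{every merge forces its alternative sets to lie in $\mathcal F$}. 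This is the uniform version of the ad hoc surgeries carried out in Cases 1--4 above, and it is the engine on which everything rests.

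\textbf{Bounding the total deficiency.} The aim is then to show that each unit of deficiency at a level away from the top and bottom forces structure that cannot be sustained. Either the forced alternative sets already exhaust the few sets present at that level, and a short combinatorial argument among the forced subsets and supersets (as in the subcases (a),(b) of Case 4) produces a contradiction; or the deficiency is driven toward the extreme levels, where only $O_k(1)$ levels have the room to absorb it. I would organise this through the dual description of saturation: $\mathcal F\cup\{X\}$ has width at most $k-1$ if and only if $X$ is comparable to a member of every maximum antichain of $\mathcal F$, so it suffices to exhibit one non-member $X$ meeting every maximum $(k-1)$-antichain. The distributive-lattice structure of the maximum antichains of $\mathcal F$ (Dilworth) should then allow one to locate such an $X$ whenever some middle level is deficient, which would force $x_i=k-1$ for all $i$ in a range $[c_k,\,n-c_k]$ and bound $\sum_i d_i$.

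\textbf{The main obstacle.} The crux is exactly what makes $k\ge 7$ currently out of reach: when the forced alternative sets all happen to lie in $\mathcal F$, one must rule out every pattern of coincidences among them, and the number of such patterns grows rapidly with $k$. Replacing this exploding level-by-level casework with a single global re-routing argument---most plausibly an augmenting-path argument in the layered containment digraph on $\mathcal F$, or a charging scheme that pays for each unit of deficiency with a distinct forced set---is the step a full proof would hinge on. I expect the correct value of $C_k$, that is the precise boundary behaviour (the analogue of the slightly smaller values $x_1,x_{n-1}$ seen for $k=6$), to fall out once the middle levels have been pinned to $x_i=k-1$.
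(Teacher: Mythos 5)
You have not proved this statement, and neither does the paper: it is an open conjecture in the ``Further work'' section, and the authors' note added in proof records that it was only later established by Bastide, Groenland, Jacob and Johnston via a generalisation of a theorem of Lehman and Ron. Your opening reduction is correct and matches how the paper organises its $k=5,6$ arguments: the upper bound $(k-1)(n-1)+2$ is already of the form $n(k-1)-O(1)$, so the conjecture is equivalent to bounding the total deficiency $\sum_i d_i$ by a constant $C_k$, and your ``deflection engine'' is a clean uniform statement of the surgeries performed in Cases 1--4. But that engine only yields that, for each chain through a merge set $A$ with neighbours $U\subset A\subset V$, the unique alternative set $X$ with $U\subset X\subset V$ lies in $\mathcal F$. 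The entire difficulty is converting this into a contradiction: one must show that the forced sets are distinct enough to overfill a deficient level, or that their pattern of coincidences is inconsistent. For $k=5,6$ the paper does this by exhaustive case analysis (Cases 2 and 4, subcases (a) and (b)), and for general $k$ your final paragraph explicitly concedes that you do not know how to do it. That concession is the whole content of the conjecture; what you have written is a research plan with the decisive step missing.

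Two further points on the substance of the plan. First, the forced set $X$ depends on the choice of the chain cover, and the paper's Case 3 shows that one must carefully re-select covers (an induction on the levels up to which two chains coincide) before the deflection argument can even be applied; a general-$k$ charging scheme would have to specify a canonical cover or argue uniformly over all covers, which your sketch does not address. Second, your aside about maximum antichains --- that $\mathcal F\cup\{X\}$ keeps width $k-1$ iff $X$ is comparable to a member of every maximum antichain, and that the maximum antichains form a distributive lattice --- is in fact the germ of the approach that ultimately succeeded, but making it work requires a genuinely new structural theorem (the generalisation of Lehman--Ron), not just the lattice structure itself. So the direction is reasonable, but the gap you name is real and is precisely where the proof lives.
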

\par The results in this paper prove the conjecture for $k=5$ and $k=6$, but in addition, the proofs hint at a more general behaviour of antichain-saturated families. In both cases we have seen that almost all levels of the antichain-saturated family have to have the maximal size possible, namely $k-1$, and based on this we make the following conjecture.
\begin{conjecture} For each fixed $k>1$ there exists $l$ with the following property. For $n$ sufficiently large,  any $k$-antichain saturated family $\mathcal F$ of subsets of $[n]$ has exactly $k-1$ sets of size $i$ for all $l\leq i\leq n-l$.
\end{conjecture}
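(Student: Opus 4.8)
The plan is to follow the deflection philosophy used for $k=5$ and $k=6$, but to isolate its local core so that it applies to a single interior level rather than to a full case distinction. By Lemma \ref{fullchainlemma} any induced $k$-antichain saturated family $\mathcal F$ is a union of $k-1$ full chains $\mathcal D_1,\ldots,\mathcal D_{k-1}$, so automatically $x_i\le k-1$ for every $i$, and $\emptyset,[n]\in\mathcal F$ give $x_0=x_n=1$. Hence the conjecture is equivalent to the lower bound $x_i\ge k-1$ on the interior window $l\le i\le n-l$, and I would aim to prove the single-level statement: \emph{if $\mathcal F$ is $k$-antichain saturated, $n$ is large, and $l\le i\le n-l$, then $x_i\ge k-1$.} Using the complementation symmetry (that $\mathcal F$ is saturated iff $\{[n]\setminus S:S\in\mathcal F\}$ is) I would, as in both theorems, reduce to configurations in which the deficient level sits just below a level of larger size.

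The engine is the following rerouting claim, which I would isolate as a lemma. Suppose $x_i\le k-2$ at an interior level. Then the assignment sending each chain to its set of size $i$ is a surjection onto at most $k-2$ sets, so some set $A$ of size $i$ lies on at least two of the chains; fixing one such chain $\mathcal D$ with neighbours $A\setminus\{c\}$ and $A\cup\{b\}$, the set $X=A\setminus\{c\}\cup\{b\}$ has size $i$ and slots into $\mathcal D$ in place of $A$, while $A$ remains covered by the other chain through it. Thus $\mathcal F\cup\{X\}$ is again a union of $k-1$ full chains, and if $X\notin\mathcal F$ this contradicts saturation exactly as in Case 1 of Theorem \ref{thmfork=5}. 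The crux is therefore to manufacture enough admissible sets $X$: I would first rearrange the cover below and above level $i$ (in the spirit of the inductive recolouring in Case 3 and the swaps in Case 4) to realise many different neighbour pairs $(A\setminus\{c\},A\cup\{b\})$, and hence many distinct candidates $X$. This is exactly where the window enters: the hypotheses $i\ge l$ and $i\le n-l$ guarantee that $A$ has at least $l$ elements to delete and at least $l$ elements outside it to add, so the supply of formal candidates is governed by $l$ rather than by $i$; choosing $l$ large enough in terms of $k$ should force the number of \emph{distinct, reroute-admissible} candidates to exceed $x_i\le k-2$, so that one of them lies outside $\mathcal F$.

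The main obstacle is precisely the passage from ``many formal candidates'' to ``many reroute-admissible candidates outside $\mathcal F$'', uniformly in $k$. A single deflection with the neighbours of $\mathcal D$ held fixed yields only the one set $X$ above, so extra candidates require genuine rerouting of the interlocking chains, and one must check that each reroute still produces a cover by $k-1$ full chains. When every candidate happens to coincide with a set already in $\mathcal F$, the local pattern around level $i$ is highly rigid; it is the elimination of all such rigid patterns that ballooned into the subcases (a)--(b) of Case 4 for $k=5$ and the $W\cup\{w_i\}$ analysis for $k=6$, and for which no uniform treatment is currently known. I would attack this by proving a \emph{local rerouting lemma}: given a set of size $i$ on at least two chains in the interior, either there exist $k-1$ pairwise distinct admissible additions of size $i$, or the cover can be modified so that two chains coincide on two consecutive levels around $i$ --- the latter feeding a merging argument (generalising Case 3) that reduces $\mathcal F$ to a union of $k-2$ full chains and hence leaves room for an entire extra chain, again contradicting saturation. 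Turning this dichotomy into a bound on $l$ depending only on $k$ is the step I expect to be genuinely hard, and is the reason the statement is posed as a conjecture rather than proved here.
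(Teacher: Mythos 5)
This statement is one of the paper's open conjectures: the authors do not prove it, and it was only settled later by Bastide, Groenland, Jacob and Johnston, as the Note added in proof records. So there is no proof in the paper to compare against, and your proposal is, by your own admission, a programme rather than a proof. The parts you do carry out are sound and correctly isolate the mechanism behind Cases 1, 2 and 4 of Theorem \ref{thmfork=5}: if $x_i\le k-2$ then by pigeonhole some $A$ of size $i$ lies on two of the $k-1$ full chains, the deflection $X=A\setminus\{c\}\cup\{b\}$ yields a new cover of $\mathcal F\cup\{X\}$ by $k-1$ chains, hence $X\in\mathcal F$, and the entire difficulty is to manufacture enough further admissible candidates to exhaust the at most $k-3$ remaining sets of size $i$.

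Two concrete gaps remain, and they are exactly where the known proofs for $k=5,6$ balloon into case analysis. First, your proposed local rerouting lemma is unproved, and its second horn is too weak as stated: two chains agreeing on two consecutive levels near $i$ does not reduce $\mathcal F$ to a union of $k-2$ full chains. Case 3 of the paper needs the two chains to agree on \emph{every} level, and obtains this by an induction over all levels that crucially uses $x_j=k-2$ at every $j$ --- a hypothesis you do not have when only one interior level is deficient. Second, no bound on $l$ in terms of $k$ is extracted. The number of \emph{formal} candidates $A\setminus\{c\}\cup\{b\}$ grows with $i$ and $n-i$, but the \emph{admissible} ones are dictated by which neighbours the chains can be rerouted to realise, and that is governed by the rigid interlocking of the chains around level $i$ (precisely what produced subcases (a)--(b) of Case 4 and the $W\cup\{w_i\}$ analysis for $k=6$). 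Nothing in your sketch rules out such rigid configurations persisting at levels arbitrarily far from the boundary, so the window $[l,n-l]$ is not yet doing any work. As it stands the proposal does not prove the conjecture.
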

\par Using the techniques in this paper, the main obstacle in proving the above conjecture for $k>6$ comes from the increased number of choices the chains we are analysing have when traversing between 2 or 3 consecutive levels of the family. A first step in proving this conjecture would be to answer the following simple yet elusive question.   
\begin{conjecture} Let $\mathcal{F}$ be a $k$-antichain saturated family and let $x_{i}$ be the number of sets of size $i$ in $\mathcal{F}$ for $0\leq i\leq n$. Then there exist an $i$ such that $x_i=k-1$.
\end{conjecture}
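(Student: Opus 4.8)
The plan is to argue by contradiction: assume $x_i\le k-2$ for every $i\in\{0,\ldots,n\}$ and derive a contradiction with the saturation of $\mathcal F$. (For $k=2$ the statement is immediate, as $x_0=1=k-1$, so take $k\ge 3$.) First I would record that a $k$-antichain saturated family has width exactly $k-1$. By Lemma \ref{fullchainlemma} it is covered by $k-1$ full chains, so its width is at most $k-1$ and, since each level is an antichain, $\max_i x_i\le k-1$; thus the conjecture asserts precisely that this width is attained on a single level. If the width were at most $k-2$, then by Dilworth's theorem $\mathcal F$ would lie inside a union $\mathcal G$ of $k-2$ full chains, which is $(k-1)$-antichain free; adding to $\mathcal F$ any set of $\mathcal G\setminus\mathcal F$, or, if $\mathcal F=\mathcal G$, any set outside $\mathcal F$ (which exists since $\mathcal F$ is a union of $k-2$ chains), cannot create a $k$-antichain, contradicting maximality. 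Hence the width is $k-1$, and the whole task reduces to showing that $\max_i x_i\le k-2$ is impossible.

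Next I would try to convert the hypothesis into a chain-count contradiction, exactly as in Case 3 of Theorem \ref{thmfork=5}. Fix a cover of $\mathcal F$ by $k-1$ full chains; since every interior level has at most $k-2$ sets, at each level at least two chains pass through a common set. The aim is to promote these local collisions into a global one, modifying the cover until two full chains coincide everywhere, which yields a cover by $k-2$ chains and contradicts width $=k-1$. Following the inductive scheme of Case 3, I would track two chains agreeing on all sets of size at most $i$ and try to extend the agreement to size $i+1$. If they split above a common set $B$ of size $i$, through $B\cup\{x\}$ and $B\cup\{y\}$, one forms $\mathcal S=\{(B\cup\{x\})\setminus\{b\},\,(B\cup\{y\})\setminus\{b\}:b\in B\}$, a family of $2i$ distinct sets of size $i$, none equal to $B$. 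As there are at most $k-3$ sets of size $i$ other than $B$, once $2i>k-3$ some $Z\in\mathcal S$ lies outside $\mathcal F$; deflecting the tracked chain through $Z$ (prepending $Z$ to its part of size at least $i+1$ and extending $Z$ downward arbitrarily, the lower sets being recovered from the twin chain by the inductive hypothesis) produces a $(k-1)$-chain cover of $\mathcal F\cup\{Z\}$ and contradicts saturation. So in the bulk the tracked pair cannot split, and the coincidence propagates.

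The remaining difficulty, and where the arguments for $k=5,6$ had to be carried out by hand, is the regime of small levels, where $2i\le k-3$, the set $\mathcal S$ is too small to force a missing set, and the clean deflection is unavailable. There one must instead reroute the chains near the bottom — and, by complementation, near the top — so as to re-establish full coincidence without uncovering any set of $\mathcal F$, in the spirit of the explicit base case $i=2$ of Case 3 and the swaps of Cases 2 and 4; non-uniform level sizes, where a jump $x_i<x_{i+1}$ between two deficient levels permits a Case 1/2/4 style deflection, must be handled similarly. I expect the main obstacle to be exactly the one the authors flag for $k>6$: as $k$ grows, the number of ways the $k-1$ chains traverse two or three consecutive low levels proliferates, so coordinating the per-level collisions into a single global merge is a rapidly branching case analysis rather than a new idea. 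In parallel I would therefore look for a one-shot deflection — a direct recipe producing a set outside any family with $\max_i x_i\le k-2$ — that sidesteps the structural ``merge two chains'' step and contradicts saturation immediately.
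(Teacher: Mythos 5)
This statement is one of the paper's open conjectures: the paper gives no proof of it, and the ``Note added in proof'' records that it was eventually established by Bastide, Groenland, Jacob and Johnston via a generalisation of a theorem of Lehman and Ron --- a tool quite different from the chain-deflection arguments of Sections 2 and 3. Your opening reduction is correct and worth keeping: a $k$-antichain saturated family has width exactly $k-1$ (at most $k-1$ since it contains no $k$-antichain, and at least $k-1$ since otherwise $\mathcal F$ would sit inside a union of $k-2$ full chains and adding any set would keep the width below $k$, contradicting saturation), so the conjecture amounts to showing that a maximum antichain can always be located inside a single level. Your ``bulk'' step is also sound as stated: if two of the $k-1$ covering chains agree on all sets of size at most $i$ and split above a common $B$ of size $i$, the family $\mathcal S$ consists of $2i$ distinct sets of size $i$ none of which is $B$, while $\mathcal F$ has at most $k-3$ sets of size $i$ other than $B$; so for $2i>k-3$ some member of $\mathcal S$ is missing and the deflection contradicts saturation.

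The proposal nevertheless does not constitute a proof, and you acknowledge this yourself. The entire difficulty of the conjecture is concentrated in exactly the parts you defer: (i) the low levels with $2i\le k-3$ (and, by complementation, the high levels), where $\mathcal S$ is too small to force a missing set and one must re-route chains by hand as in the base case $i=2$ of Case 3 and the ad hoc swaps of Cases 2 and 4; and (ii) non-uniform level sizes, where the colliding pair of chains can change from level to level, so the local coincidences must somehow be coordinated into a single pair of chains that coincide everywhere. For $k=5,6$ these are finite case analyses carried out explicitly in the paper; for general $k$ they are precisely the ``increased number of choices the chains have when traversing between 2 or 3 consecutive levels'' that the authors flag as the obstruction, and your sketch offers no mechanism to tame that branching. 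The fact that the eventual proof went through a Lehman--Ron-type result rather than chain merging is evidence that this is not merely a bookkeeping issue. In short: correct framing and a correct first reduction, but the core of the conjecture remains unproved in your argument.
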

\textbf{Note added in proof.} Recently Bastide, Groenland, Jacob and Johnston showed in the paper `Exact antichain saturation numbers via a generalisation of a result of Lehman-Ron' (ar$\chi$iv: 2207.07391) that all our conjectures are true, thus solving the general saturation problem for the antichain.
\bibliographystyle{amsplain}
\bibliography{document}
\Addresses
\end{document}